\newtheorem{theorem}{Theorem}[section]
\newtheorem{corollary}[theorem]{Corollary}
\newtheorem{lemma}[theorem]{Lemma}
\theoremstyle{definition}
\newtheorem{definition}[theorem]{Definition}
\newtheorem{example}[theorem]{Example}
\theoremstyle{parrafo}
\begin{document}

\title[]{On the pointwise domination of a function by its maximal function}

\author{J. M. Aldaz}
\address{Instituto de Ciencias Matem\'aticas (CSIC-UAM-UC3M-UCM) and Departamento de 
Matem\'a\-ticas,
Universidad  Aut\'onoma de Madrid, Cantoblanco 28049, Madrid, Spain.}
\email{jesus.munarriz@uam.es}
\email{jesus.munarriz@icmat.es}

\thanks{2000 {\em Mathematical Subject Classification.} 42B25}

\thanks{The author was partially supported by Grant MTM2015-65792-P of the
MINECO of Spain, and also by by ICMAT Severo Ochoa project SEV-2015-0554 (MINECO)}







\begin{abstract} We show that under rather general circumstances, 
the almost everywhere pointwise inequality $|f|(x)  \le Mf (x)$ is equivalent to a 
weak form of the conclusion of the Lebesgue density theorem, for
totally bounded closed sets.  We derive both positive and negative results from this characterization.
\end{abstract}


\maketitle


\markboth{J. M. Aldaz}{Pointwise domination by the maximal function}

\section {Introduction} Let   $f$ be a locally integrable function, and let  $Mf$ denote the centered
Hardy-Littlewood maximal function of $f$.
A standard proof of the almost everywhere pointwise inequality $|f|(x)  \le Mf (x)$  runs as follows: Prove a weak type
(1,1) inequality for $Mf$, deduce the Lebesgue differentiation theorem, and use the fact that the supremum over all balls 
 is at least  as large as the limit when $r\downarrow 0$. 
 
 To a considerable extent,  the usefulness of $Mf$ in the presence of
   weak type (1,1) bounds, comes from the
 fact that it is a.e. larger than $|f|$ but not much larger, since it is still comparable 
 to $f$ in the $L^p$ sense, for $p > 1$. One might
 suspect than when the weak type (1,1) bounds fail, $Mf$ should be exceedingly large, so
 the a.e. pointwise inequality $|f|(x)  \le Mf (x)$ ought to hold. But as we shall see, this
 need not be the case.
 We show below that the inequality 
$|f|(x)  \le Mf (x)$ a.e.
 is equivalent, under 
very general assumptions, to the following  weak form of the conclusion of  the Lebesgue density theorem:
 For 
all closed and totally bounded sets $F\subset \operatorname{supp}\mu$ and a.e. $x$,
$$\limsup_{r\downarrow 0} \frac{1}{\mu
( B(x, r))} \int _{ B(x, r )}  \mathbf{1}_F\ d\mu = \mathbf{1}_F (x).
$$
It then follows from  a result of D. Preiss 
 that  there exists a Gaussian measure on an infinite dimensional Hilbert space $H$ for which 
$|f|(x)  \le Mf (x)$ fails on a set of positive measure, while by results of D. Preiss  and J. Ti\u{s}er
the said inequality holds a.e. for some classes of Gaussian measures on $H$  (cf.  
\cite{Pr}, \cite{PrTi}, \cite{Ti}).

 We also prove some positive results in the setting of metric measure spaces:
If the space is geometrically doubling, or (without restrictions on the metric space) 
if the size of balls is not ``too small" as the radii tend to zero,
then  $|f|(x)  \le Mf (x)$ holds  a. e., for all reasonable (meaning $\tau$-additive and finite on bounded sets) 
 Borel measures. The last section deals with $\sigma$-algebras larger 
 than the Borel sets, along the lines of  \cite{AlMi} on 
  extensions of the Lebesgue
 differentiation theorem.
 
 I am indebted to an anonymous referee for his or her very careful reading of this paper,
 as well as several useful suggestions.

\section {Some definitions and  background}

First  we present the
basic definitions and background material, spending some time on why the definition of metric measure spaces given
below, involving  $\tau$-additivity,  is the right one in this context.
 The standard axioms for set theory, Zermelo-Fr\"ankel with Choice, are abbreviated
by ZFC. 
We  use $B(x,r) := \{y\in X: d(x,y) < r\}$ to denote open balls, 
 and 
$B^{cl}(x,r) := \{y\in X: d(x,y) \le r\}$ to refer to metrically closed balls.  It is always assumed that measures are not
identically 0.

\begin{definition} A measure on a topological space is {\em Borel} if it is
	defined on the sigma algebra generated by the open sets. 
	A Borel measure is  {\em $\tau$-smooth} 
or {\em $\tau$-additive}, if for every
collection  $\{U_\alpha : \alpha \in \Lambda\}$
 of  open sets, 
$$
\mu (\cup_\alpha U_\alpha) = \sup_{\mathcal{F}} \mu(\cup_{i=1}^nU_{\alpha_i}),
$$
 where the supremum is taken over all finite subcollections $\mathcal{F} = \{U_{\alpha_1}, \dots, U_{\alpha_n} \}$
of  $\{U_\alpha : \alpha \in \Lambda\}$.
 We say that $(X, d, \mu)$ is a {\em metric measure space} if
$\mu$ is a  $\tau$-additive  Borel measure on the metric space $(X, d)$, such that $\mu$ assigns finite measure
to bounded Borel sets. 
\end{definition}

Recall that via the Caratheodory's outer measure construction, any measure $\mu$ on
a $\sigma$-algebra can be extended in a unique way to its $completion$, defined on
the $\sigma$-algebra of all $\mu^*$-measurable subsets. By a slight abuse of
notation, we also call $(X, d, \mu)$ a metric measure space
when the Borel measure $\mu$ is replaced by its completion, denoted
again  by $\mu$
(this being motivated by the fact that the equivalence classes of measurable
functions in the larger $\sigma$-algebra, still have Borel representatives). 

The following lemma is well known, see for instance
\cite[Theorem 7.1.7]{Bo}. It is placed here as a reminder.
Recall that a set $T$ is {\em totally bounded} if given any $r > 0$, $T$ can be covered
with finitely many balls of radius $r$.

\begin{lemma} \label{inner}  Let $(X, d)$ be a metric  space endowed with a finite Borel measure
$\mu$. Then every Borel set can be approximated in measure from within
by closed sets.
Furthermore, if $\mu$ is
$\tau$-additive, then the approximating closed sets can be chosen to be totally bounded. 
\end{lemma}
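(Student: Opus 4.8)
The plan is to establish the two assertions in turn: the inner regularity by closed sets via the usual good-sets argument, and then the total boundedness of the approximating sets by using $\tau$-additivity to manufacture a single totally bounded closed set carrying almost all of the mass of $X$, which we intersect with the closed approximations obtained in the first step.

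First I would let $\mathcal{A}$ be the collection of Borel sets $A$ such that for each $\epsilon>0$ there exist a closed set $F$ and an open set $U$ with $F\subset A\subset U$ and $\mu(U\setminus F)<\epsilon$, and check that $\mathcal{A}$ is a $\sigma$-algebra containing every closed set. Containment of a closed set $C$ is immediate on taking $F=C$ and $U=U_n:=\{x\in X:d(x,C)<1/n\}$ for $n$ large, since $U_n\downarrow C$ and $\mu(U_1)\le\mu(X)<\infty$, so $\mu(U_n\setminus C)\downarrow 0$. Stability under complementation is formal, as complementation swaps $F$ and $U$ and leaves $\mu(U\setminus F)=\mu(F^c\setminus U^c)$ unchanged. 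For a countable union $A=\bigcup_n A_n$ I would pick $F_n\subset A_n\subset U_n$ with $\mu(U_n\setminus F_n)<\epsilon 2^{-n-1}$; then $\bigcup_n U_n$ is an open superset of $A$ with $\mu\big(\bigcup_n U_n\setminus A\big)<\epsilon/2$, and $\bigcup_n F_n$ is a (not necessarily closed) subset of $A$ with $\mu\big(A\setminus\bigcup_n F_n\big)<\epsilon/2$, so that continuity from below lets me replace $\bigcup_n F_n$ by the closed set $\bigcup_{n\le N}F_n$ for $N$ large, losing only another arbitrarily small amount of measure. Hence $\mathcal{A}$ is a $\sigma$-algebra containing the closed sets, so it equals the Borel $\sigma$-algebra, which gives the first claim (and in fact simultaneous outer regularity by open sets).

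For the second claim, assume $\mu$ is $\tau$-additive and fix $\epsilon>0$. For each integer $k\ge1$ the balls $\{B(x,1/k):x\in X\}$ form an open cover of $X$, so by $\tau$-additivity together with $\mu(X)<\infty$ there are finitely many centers $x^k_1,\dots,x^k_{n_k}$ with $\mu\big(X\setminus V_k\big)<\epsilon 2^{-k-1}$, where $V_k:=\bigcup_{i=1}^{n_k}B(x^k_i,1/k)$. I would then set $G:=\bigcap_{k\ge1}V_k$ and let $\overline{G}$ be its closure. The set $G$ is totally bounded, since for any $r>0$ choosing $k$ with $1/k\le r$ exhibits $G\subset V_k$ as covered by $n_k$ balls of radius $r$; and the closure of a totally bounded set is again totally bounded (cover by finitely many balls of radius $r/2$ and enlarge them to radius $r$), so $\overline{G}$ is totally bounded and closed. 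Finally $\mu(X\setminus\overline{G})\le\mu(X\setminus G)\le\sum_{k\ge1}\mu(X\setminus V_k)<\epsilon/2$. Now, given any Borel set $B$, the first part produces a closed $F\subset B$ with $\mu(B\setminus F)<\epsilon/2$, and then $F':=F\cap\overline{G}$ is closed, totally bounded as a subset of $\overline{G}$, contained in $B$, and satisfies
$$\mu(B\setminus F')\le\mu(B\setminus F)+\mu(X\setminus\overline{G})<\epsilon .$$

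The routine part is the first claim, which is entirely standard. The point requiring care is the second: one has to notice that $\tau$-additivity is precisely the hypothesis that allows passing from the (possibly uncountable) cover of $X$ by balls of a fixed radius to a finite subfamily capturing all but $\epsilon$ of the measure — plain countable additivity would not suffice in a non-separable space — and then that intersecting such finite unions over a sequence of radii yields a single totally bounded set of almost full measure, which can be intersected with the closed inner approximations furnished by the first part.
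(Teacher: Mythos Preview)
Your proof is correct and follows essentially the same approach as the paper: the standard good-sets argument for inner approximation by closed sets, followed by using $\tau$-additivity to extract finite subcovers by balls of radius $2^{-k}$ (or $1/k$) and intersecting over $k$ to produce a totally bounded closed set of almost full measure. The only cosmetic difference is that the paper carries out the second construction directly on a given closed set $C$ (using balls centered in $C$, so that the intersection $\bigcap_n \overline{O_n}$ automatically lies in $C$), whereas you build one universal totally bounded closed set $\overline{G}\subset X$ and then intersect it with the closed approximation $F$; both are equally valid.
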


\begin{proof}
 Call a set $A$ {\em approximable} if for every $\varepsilon > 0$
there exist an open set $O$ and a closed set $C$ such that $C \subset A \subset O$ and $\mu (O \setminus C) < \varepsilon$;
clearly,  the closed sets $C$  are approximable (since $C = \cap_n \{x\in X: d(x, C) < 1/n\}$), the complement of an approximable set is
approximable, and countable unions of approximable sets are approximable. 
Thus,  all Borel sets are approximable.
We give more detail on why countable unions of approximable sets are approximable from within. Given $\varepsilon > 0$,
and a sequence $\{A_n\}_{1 \le n < \infty}$ of approximable sets, first reduce it,
using countable additivity, 
to a finite sequence $\{A_n\}_{1 \le n \le N}$ satisfying 
$\mu \cup_{1 \le n < \infty}A_n - \mu \cup_{1 \le n \le N} A_n < \varepsilon/4$. 
Then select, for each $n$, a closed set $C_n \subset A_n$ such that
$\mu (A_n \setminus C_n ) < \varepsilon/2^{n + 2}$, and note that
$\cup_{1 \le n \le N} C_n$ is a closed set with 
$\mu \cup_{1 \le n < \infty} A_n - \mu \cup_{1 \le n \le N} C_n < \varepsilon/2$.

Assume next that $\mu$ is $\tau$-additive. Let $C$ be closed, and let $\varepsilon > 0$. Select a finite subcollection 
$B_1^1, \dots, B_{n_{1}}^1$ from the cover of $C$ given by $\{B(x,1): x \in C\}$
(by $\tau$-additivity, since this cover may be uncountable) such that
$\mu (C\setminus \cup_1^{N_1}  B_i^1 ) < \varepsilon/2$, and set $O_1 =  \cup_1^{N_1}  B_i^1$.
 Then repeat, using succesively balls of radii $2^{-1}, 2^{-2}, \dots$,
so that at stage $k$, $O_k$ is a finite union of balls of radius $2^{-k}$, centered at points of $C$, and satisfying
$\mu (C\setminus O_k ) < \varepsilon/2^{k}$. Then $K := \cap_n \overline{O_n} \subset C$ is closed,  totally bounded, and satisfies
$\mu (C\setminus K ) < \varepsilon$. 
\end{proof}

From now on, we assume that all Borel measures under consideration are finite on bounded sets. The role of $\tau$-additivity can be explained as follows: In general, uncountable unions of 
measurable sets need not be measurable, but when the measurable sets are open, then
the uncountable unions are open, and hence Borel. Thus, $\tau$-additivity is a strengthening
of countable additivity for open sets, which allows us to reduce arbitrary unions to finite
unions with  arbitrarily small errors.

 If the metric space is separable, then uncountable unions
of open sets can be reduced to countable unions, and hence $\tau$-additivity holds for
all Borel measures $\mu$. 
Separability is sometimes assumed when defining metric measure spaces (cf. for
instance \cite[p.  62]{HKST}) but this has the unfortunate consequence of
excluding from the definition spaces that appear naturally in analysis, such as $L^\infty ([0,1])$.

Also, if $X$ is not separable
but $\mu$ is Radon (inner regular with respect to the compact sets) then $\mu$ is $\tau$-additive. Of course, it is natural to ask whether there are Borel measures on metric spaces
that fail to be $\tau$-additive. More on this to follow soon.

\begin{definition}\label{maxfun} Let $(X, d, \mu)$ be a metric measure space and let $g$ be  a locally integrable function 
on $X$. For each  $x\in X$, {\em the centered Hardy-Littlewood maximal operator} $M_{\mu}$ 
 is given by
\begin{equation}\label{HLMFc}
M_{\mu} g(x) := \sup _{\{r :  \ 0 < \mu (B^{cl}(x, r))\}}  \frac{1}{\mu
(B^{cl}(x, r))} \int _{B^{cl}(x, r)}  |g| d\mu.
\end{equation}
When the supremum is taken over radii $r\le R$, for some fixed $R > 0$, we obtain the {\em localized maximal operator  }
\begin{equation}\label{HLMFcR}
M_{\mu, R} \  g(x) := \sup _{\{r \le R : \ 0 < \mu (B^{cl}(x, r))\}}  \frac{1}{\mu
(B^{cl}(x, r))} \int _{B^{cl}(x, r)}  |g| d\mu.
\end{equation}
\end{definition}

Often we simplify notation by writing $M_{R}$ and $M$  instead of $M_{\mu, R}$ and $M_{\mu}$, when no confusion is likely
to arise.

Maximal operators can be defined using open balls instead of closed balls,
and this does not change their values, for we are taking suprema.
The choice made here is merely  for convenience, since from the viewpoint of the covering arguments
given below, it is better to have closed balls.
In fact, we shall utilize the definition with open balls whenever it suits us. 

Recall that 
the complement of
the support $(\operatorname{supp}\mu)^c := \cup \{ B(x, r): x \in X, \mu B(x,r) = 0\}$
of a Borel  measure
 $\mu$,  is an open set, and hence measurable. 
By the convention used in (\ref{HLMFc}), the  Hardy-Littlewood maximal function $M_{\mu} g$ is well defined
everywhere on $X$, even on $(\operatorname{supp}\mu)^c$.  However,  if there exists a ball $B(x, S) \subset (\operatorname{supp}\mu)^c$ 
with $R < S$, then $M_{\mu, R} g$ is not defined at $x$.
Another possible convention is to leave  $M_{\mu, R}$ and $M_{\mu}$  undefined off 
$\operatorname{supp}\mu$. 
Of course, which convention one uses matters little if $(\operatorname{supp}\mu)^c$ has measure zero. When $\mu (X \setminus \operatorname{supp}\mu) = 0$ 
we say $\mu$ has {\em full support}. 

Now $\tau$-additivity implies that 
$\mu$ has full support,
since $X \setminus \operatorname{supp}\mu $ is a union of open balls of measure zero.
Actually, the other implication also holds, for the support is always separable; otherwise,
we would be able to find an uncountable collection of disjoint open balls of positive measure,
contained in some larger ball (of finite measure).
Thus,  having full support is equivalent to
$\tau$-additivity. 

From the viewpoint of defining averaging (for a fixed radius)
 and maximal averaging operators,
it is important to have both full support and balls with finite measure. Thus, 
 metric measure spaces, as defined above, constitute a natural and sufficiently  general class 
 to deal with the type of issues addressed here.
 
Returning to the question whether there are Borel measures on metric spaces
that fail to be $\tau$-additive, a positive answer turns out to be equivalent to the assumption of
the existence of certain very large cardinals.

\begin{definition} Let $(X,d)$ be a metric space with $d(x,y) = 1$ if $x\ne y$.  Suppose there exists a $0-1$ valued Borel measure 
on $X$, such that for all $x\in X$, $\mu\{x\} = 0$ (and $\mu X = 1$). Then we say that $X$ has {\em measurable cardinality}.
\end{definition} 

It is well known that the existence of  measurable cardinals entails that ZFC has models, and hence by G\"odel's second
incompleteness theorem, measurable cardinals cannot be proven to exist within ZFC (unless ZFC is inconsistent). 
Thus, in standard mathematical practice we will never meet a measurable cardinal
(the reader interested in obtaining additional information on these subjects, may want to consult 
\cite[Theorem  1.12.44]{Bo}, and for a more detailed treatment, \cite[Chapters 12 and 23]{JuWe}).

 If
ZFC is consistent, then we can add as a new axiom the non-existence of measurable cardinals and
obtain a new consistent theory, within which all Borel measures on a metric space are $\tau$-additive,
cf. \cite[Proposition 7.2.10]{Bo}. Thus, we can consistently assume that every  metric space with a Borel
measure (finite on bounded sets)  is actually a metric measure space as defined above.

Furthermore, if the existence of measurable cardinals is assumed, the connection between denstity results
and  pointwise domination by the maximal function breaks down, since it may be impossible to consider balls
of small radii.

\begin{example} \label{noncentered} Suppose $X$ is a measurable cardinal, and let $d$ be the discrete distance 
on $X$ ($d(x,y) = 1$  if $x\ne y$). Let $\mu$ be a $0-1$ Borel measure on $X$ that vanishes on singletons, and
let $y\in X$. Now consider $\mathbf{1}_{X\setminus \{y\}}$. Define $\nu := \mu + \delta_y$. 
With the convention from (\ref{HLMFc}), the only 
closed balls  centered at points of $X\setminus \{y\}$ we can consider 
have radius at least 1, and hence they contain the whole space. Thus, on $X\setminus \{y\}$ we have
 $M_\nu \mathbf{1}_{X\setminus \{y\}} = 1/2 < \mathbf{1}_{X\setminus \{y\}}$. 
\end{example} 

\section {Main result}

\begin{theorem}\label{equivalence} Let $(X, d, \mu)$ be a metric measure space. The
following are equivalent:

1)  For all $f\in L_{loc}^1(\mu)$,
$Mf (x) \ge |f|(x)$ almost everywhere.  

2) For 
all closed and totally bounded sets $F\subset \operatorname{supp}\mu$,  we have 
$$\limsup_{r\downarrow 0} \frac{1}{\mu
( B^{cl}(x, r))} \int _{ B^{cl}(x, r )}  \mathbf{1}_F\ d\mu = \mathbf{1}_F (x)
$$
almost everywhere.

3)  For all $f\in L_{loc}^1(\mu)$ and all $R >0$,
$M_R f (x) \ge |f|(x)$ almost everywhere.

\noindent Furthermore, the same equivalence holds if we use open instead of closed balls.
\end{theorem}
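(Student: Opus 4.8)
The strategy is to prove the two equivalences $(1)\Leftrightarrow(3)$ and $(2)\Leftrightarrow(3)$. The implication $(3)\Rightarrow(1)$ is immediate: $\tau$-additivity entails full support, so for a.e.\ $x$ every ball $B^{cl}(x,r)$ has positive measure, the relevant maximal operators are defined at $x$, and $M_Rf(x)\le Mf(x)$ since the supremum defining $M_R$ ranges over a subset of the admissible radii; thus $|f|(x)\le M_Rf(x)\le Mf(x)$ a.e.

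The heart of the matter is $(1)\Rightarrow(3)$, where the plan is a localization-and-truncation argument. Suppose, for some $f\in L^1_{loc}(\mu)$ and $R>0$, that $M_Rf<|f|$ on a set of positive measure. Since $\operatorname{supp}\mu$ is separable and $\mu(X\setminus\operatorname{supp}\mu)=0$, it is covered by countably many balls of radius $R/3$, so $E':=\{M_Rf<|f|\}\cap B(z,R/3)\cap\operatorname{supp}\mu$ has positive measure for a suitable $z$. Set $\tilde f:=f\,\mathbf 1_{B(z,R/3)}$, which lies in $L^1(\mu)$ as $B(z,R/3)$ is bounded; note $\tilde f=f$ on $E'$. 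For $x\in E'$ I would check $M\tilde f(x)\le M_Rf(x)$: when $0<r\le R$, the average of $|\tilde f|$ over $B^{cl}(x,r)$ is at most that of $|f|$, hence at most $M_Rf(x)$; when $r>R$, the ball $B^{cl}(x,r)$ already contains $B(z,R/3)$ (because $d(x,z)<R/3$) and $\tilde f$ vanishes off $B(z,R/3)$, so that average equals $\|\tilde f\|_1/\mu(B^{cl}(x,r))\le\|\tilde f\|_1/\mu(B^{cl}(x,2R/3))$, the latter being the average of $|\tilde f|$ over $B^{cl}(x,2R/3)$ and hence $\le M_{2R/3}f(x)\le M_Rf(x)$. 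Therefore $M\tilde f(x)\le M_Rf(x)<|f|(x)=|\tilde f|(x)$ for every $x\in E'$, contradicting $(1)$.

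For $(3)\Rightarrow(2)$: given $F\subset\operatorname{supp}\mu$ closed and totally bounded, closedness gives, for each $x\notin F$, a ball about $x$ missing $F$, whence $\limsup_{r\downarrow0}\mu(B^{cl}(x,r))^{-1}\int_{B^{cl}(x,r)}\mathbf 1_F\,d\mu=0=\mathbf 1_F(x)$ automatically; and applying $(3)$ to $f=\mathbf 1_F$ with $R=1/n$, then intersecting over $n$, produces for a.e.\ $x\in F$ and each $n$ a radius $r\le1/n$ at which the average of $\mathbf 1_F$ over $B^{cl}(x,r)$ is arbitrarily close to $1$, so that (these averages being $\le1$) the limsup equals $1=\mathbf 1_F(x)$. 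For the converse $(2)\Rightarrow(3)$, I would first bootstrap $(2)$ to all Borel sets $A$ of finite measure: after intersecting with a bounded ball (so that Lemma~\ref{inner} applies) and with $\operatorname{supp}\mu$, exhaust $A$ up to a null set by closed totally bounded sets $F_n\subset A$; since the average of $\mathbf 1_{F_n}$ over any ball is dominated by that of $\mathbf 1_A$, condition $(2)$ for the $F_n$ forces $\limsup_{r\downarrow0}\mu(B^{cl}(x,r))^{-1}\int_{B^{cl}(x,r)}\mathbf 1_A\,d\mu=1$ for a.e.\ $x\in A$. Applying this to the (localized) super-level sets $\{|f|\ge t\}$ for rational $t>0$ and using $|f|\ge t\,\mathbf 1_{\{|f|\ge t\}}$ gives $\limsup_{r\downarrow0}\mu(B^{cl}(x,r))^{-1}\int_{B^{cl}(x,r)}|f|\,d\mu\ge t$ for a.e.\ $x\in\{|f|\ge t\}$; a union over $t$ together with $t\uparrow|f|(x)$ then yields $\limsup_{r\downarrow0}\mu(B^{cl}(x,r))^{-1}\int_{B^{cl}(x,r)}|f|\,d\mu\ge|f|(x)$ a.e., and since this limsup is at most $M_Rf(x)$, condition $(3)$ follows.

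Finally, switching to open balls changes nothing, because for fixed $x$ the maps $r\mapsto\mu(B(x,r))^{-1}\int_{B(x,r)}g\,d\mu$ and $r\mapsto\mu(B^{cl}(x,r))^{-1}\int_{B^{cl}(x,r)}g\,d\mu$ are respectively left- and right-continuous and coincide except at the at most countably many radii at which $\{y:d(x,y)=r\}$ has positive measure, so the pointwise limsups and the suprema over $\{r\le R\}$ or over all radii appearing in $(1)$, $(2)$, $(3)$ are unaffected. I expect the main obstacle to be $(1)\Rightarrow(3)$, and within it the choice of scale: one must shrink the bad set into a ball of radius comparable to $R$ so that the truncation $\tilde f$ is supported in a set swallowed whole by every averaging ball of radius $>R$ centred on the bad set, which is precisely what prevents the uncontrolled large-scale averages of $\tilde f$ from exceeding $M_Rf$. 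The measurability of the sets that enter these arguments is routine, using the lower semicontinuity of $x\mapsto\mu(B(x,r))$ and of $x\mapsto\int_{B(x,r)}|g|\,d\mu$ together with the one-sided continuity just noted, which lets one compute the relevant limsups along rational radii.
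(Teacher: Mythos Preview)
Your proof is correct and takes a genuinely different route from the paper. The paper proves the cycle $3)\Rightarrow 1)\Rightarrow 2)\Rightarrow 3)$: for $1)\Rightarrow 2)$ it fixes a closed totally bounded $F$ and a point $x\in F$ with $M\mathbf 1_F(x)=1$, and then performs a case analysis on the scale at which the supremum is (almost) achieved, ruling out the possibilities $r_n\to r_0\in(0,\infty)$ and $r_n\to\infty$ to force the $\limsup$ at $r\downarrow 0$ to equal $1$; for $2)\Rightarrow 3)$ it approximates $f$ in $L^1$ by simple functions $\sum a_i\mathbf 1_{F_i}$ with the $F_i$ \emph{disjoint} closed and totally bounded, so that near each point only one $F_i$ contributes. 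Your route instead establishes $1)\Leftrightarrow 3)$ directly by a neat localization: truncating $f$ to a ball $B(z,R/3)$ containing part of the bad set makes every large-radius average of $\tilde f$ dominated by an average at scale $\le R$, collapsing $M\tilde f$ to $M_Rf$ and yielding a contradiction with $1)$. For $2)\Rightarrow 3)$ you bootstrap $2)$ from closed totally bounded sets to arbitrary bounded Borel sets via inner approximation (Lemma~\ref{inner}), and then pass to $f$ through the super-level sets $\{|f|\ge t\}$. Your $1)\Rightarrow 3)$ is cleaner than the paper's case analysis and isolates the essential mechanism (scale localization) more transparently; on the other hand, the paper's $1)\Rightarrow 2)$ argument gives slightly more information, since it shows directly why the supremum defining $M\mathbf 1_F$ cannot be realized at positive or infinite scales, a fact used again after Corollary~\ref{preiss}. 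Both $2)\Rightarrow 3)$ arguments are standard; your super-level-set version avoids the need to manufacture disjoint closed pieces.
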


\begin{proof} While the present proof is written in terms of closed balls, it is easy to check that 
similar arguments  work for open balls. Alternatively, one may notice that neither the
value of the maximal function nor of the limsup are changed if we use open instead of 
closed balls.

 First, note that  3) implies 1) always. The hypothesis of 
 $\tau$-additivity is used as follows.
 By disregarding
a set of measure zero if needed,  we suppose that
$\operatorname{supp}\mu =X$. This entails that the definition of the maximal function involves balls of
all radii at every point, removing the obstacle from the preceding example.

Let us show that 1) implies 2). Suppose $F$ is closed and totally bounded, and assume 1). Then for almost every $x\in F$, we have $M \mathbf{1}_F (x)
 = \mathbf{1}_F (x)$. Fix one such $x$. If the supremum is attained at some fixed ball $ B^{cl}(x, R)$, so
 $$
\frac{1}{\mu ( B^{cl}(x, R))} \int _{ B^{cl}(x, R )}  \mathbf{1}_F\ d\mu = \frac{\mu (F \cap  B^{cl} (x, R))}{\mu ( B^{cl}(x, R))} 
 = 1,
 $$
 then $\mu ( B^{cl}(x, R) \setminus  F) = 0$, and hence for every $0 < r \le R$, 
$\mu ( B^{cl}(x, r) \setminus  F) = 0$. Therefore, 
$$
\lim_{r\downarrow 0} \frac{\mu (F \cap  B^{cl} (x, r))}{\mu
( B^{cl}(x, r))} = 1.
$$

Suppose next that the supremum is not attained. Select a sequence of radii 
$\{r_n\}_{n\ge 1}$ such that the averages approach 1 as $n \to \infty$.
By passing to a subsequence, we may assume that the radii either diverge
to infinity, or tend to some $0 < r_0 < \infty$, or go to zero. Next we prove
that the first two possibilities lead to contradictions.

Note first that if for some $0 < r_0 < \infty$ and some sequence $\{r_n\}_{n\ge 1}$
with $\lim_n r_n = r_0$ we have
$$
1 = M \mathbf{1}_F (x)
 = 
\lim_{n\to \infty} \frac{1}{\mu ( B^{cl}(x, r_n))} \int _{ B^{cl}(x, r_n )}  \mathbf{1}_F\ d\mu,
$$
then the supremum is attained at  $ B^{cl}(x, r_j)$ for some $j \ge 0$. 
To see why, observe that if $r_n \ge r_0$
for infinitely many $n$, by passing to a subsequence, we may assume that $r_n\downarrow r_0$,
and then
$$
\frac{1}{\mu ( B^{cl}(x, r_0))} \int _{ B^{cl}(x, r_0 )}  \mathbf{1}_F\ d\mu = 
\lim_{n\to \infty} \frac{1}{\mu ( B^{cl}(x, r_n))} \int _{ B^{cl}(x, r_n )}  \mathbf{1}_F\ d\mu = 1,
$$
so we can take $j = 0$.
On the other hand, if  $r_n \le r_0$ for all but (perhaps) finitely many $n$'s, we can
select a subsequence, also denoted by  $\{r_n\}_{n\ge 1}$, with 
 $r_n\uparrow r_0$. 
Then we can take $j = 1$, for if $\mu (B(x, r_1)  \setminus F) > 0$, the following
contradiction is obtained:
$$
1 = \lim_{n\to \infty} \frac{1}{\mu ( B^{cl}(x, r_n))} \int _{ B^{cl}(x, r_n )}  \mathbf{1}_F\ d\mu
\le
1 - \frac{\mu ( B^{cl}(x, r_1)\setminus F)}{\mu B(x, r_0 )} 
 < 1.
$$

Second, we show that the case $r_n \to \infty$ cannot happen either, so we must
have 
$$
1 = M \mathbf{1}_F (x) = \limsup_{r\downarrow 0} 
\frac{\mu (F \cap  B^{cl} (x, r))}{\mu
( B^{cl}(x, r))},
$$ 
 from which 2) follows, since $F$ is closed. To this end, we prove that 
$$
M \mathbf{1}_F (x) >  \limsup_{r\uparrow \infty} 
\frac{\mu (F \cap  B^{cl} (x, r))}{\mu
( B^{cl}(x, r))}.
$$ 
If $\mu X = \infty$, this follows from the fact that $\mu F < \infty$. And if 
$\mu X < \infty$, we select any $R > 0$ and note that by hypothesis,
$\mu ( B^{cl}(x, R) \setminus  F) > 0$. Now
$$
\limsup_{r\uparrow \infty} 
\frac{\mu (F \cap  B^{cl} (x, r))}{\mu
( B^{cl}(x, r))}
 \le 
\limsup_{r\uparrow \infty} 
\frac{\mu ( B^{cl} (x, r)) -  \mu ( B^{cl}(x, R) \setminus  F)}{\mu
( B^{cl}(x, r))} 
$$
$$
=  1 - \frac{\mu ( B^{cl}(x, R) \setminus  F)}{\mu
(X)} < 1.
$$

Next, assume 2). Let $f\ge 0$ be locally integrable. To obtain 3) it is enough to show that for every
$y\in X$ and every $R > 0$,  we have
$
M_R f (x) \ge  f(x)
$ almost everywhere on $B(y, S)$, where $S \ge R$.

 Choose $\varepsilon > 0$. Using Lemma \ref{inner}, we select a simple function $0 \le s \le  f \mathbf{1}_{B(y, 2 S)}$
of the form $s = \sum_1^n a_i \mathbf{1}_{F_i}$, where the sets $F_i$ are disjoint, closed and
totally bounded,  the coefficients $a_i$,
strictly positive, and furthermore, $ f \mathbf{1}_{B(y, 2 S)} < s + \varepsilon $, save perhaps on a subset of measure $< \varepsilon$.

We present more detail on why simple functions can be chosen in this way.
 First, by disjointification, a non-negative simple function $h$ can always  be expressed as 
 $h = \sum_1^n a_i \mathbf{1}_{D_i}$, where the sets $D_i$ are disjoint and the coefficients
 $a_i >0$. Next, given any
 $\delta > 0$, by Lemma \ref{inner}, for each $i\in {1,\dots, n}$ we can select $F_i \subset D_i$
 with $\mu (D_i \setminus F_i) < \delta/(n \max\{a_1, \dots, a_n\})$, so writing 
 $s = \sum_1^n a_i \mathbf{1}_{F_i}$, we have $\|h - s\|_1 < \delta$, from whence it follows
 that  this smaller
 class of simple functions is still dense in $L^1$. Finally, since $ f \mathbf{1}_{B(y, 2 S)} $
 is in $L^1$, we can find  a sufficiently large constant $T$ such that $f \mathbf{1}_{B(y, 2 S)} $
 and $g:= \min\{T,  f \mathbf{1}_{B(y, 2 S)}\}$ are equal save perhaps on a set of measure
 less than $\varepsilon$. Since the latter function is bounded, non-negative and in $L^1$,
 a simple function $s$ of the prescribed form can be found so that $s \le g < s + \varepsilon$
 everywhere.

Now if $s(x) > 0$, then there is a unique $i = i(x)$ such that $x \in F_i$. Since $\cup_{j\ne i} F_i$ is a finite union of closed sets,
it is closed, so the distance between $x$ and $\cup_{j\ne i} F_i$ is strictly positive. It follows that
$$
 \limsup_{r\downarrow 0} \frac{1}{\mu
( B^{cl}(x, r))} \int _{ B^{cl}(x, r )} s \ d\mu 
=
 a_i \limsup_{r\downarrow 0}  \frac{1}{\mu
( B^{cl}(x, r))} \int _{ B^{cl}(x, r )}  \mathbf{1}_{F_i}\ d\mu,
$$
so  for almost all  $x \in B(y, S)$ we have
$$
 M_R f (x) \ge  M_R s(x) \ge \limsup_{r\downarrow 0} \frac{1}{\mu
( B^{cl}(x, r))} \int _{ B^{cl}(x, r )} s \ d\mu $$
$$ =  \sum_1^n a_i \limsup_{r\downarrow 0}  \frac{1}{\mu
( B^{cl}(x, r))} \int _{ B^{cl}(x, r )}  \mathbf{1}_{F_i}\ d\mu
=  \sum_1^n a_i  \mathbf{1}_{F_i}(x)  = s(x).
$$
Thus, on $B(y, S)$ we have that $
 M_R f  > f - \varepsilon$, save perhaps on a set of measure $< \varepsilon$, and now 3) follows
by first  letting  $\varepsilon \downarrow 0$, and then $S\uparrow\infty$.
\end{proof}

\section{Corollaries}

We begin with a negative consequence of the characterization presented in Theorem \ref{equivalence}.

\begin{corollary}\label{preiss} There is a complete  metric measure space $(X, d, \mu)$ and a function
 $f\in L^1(\mu)$, such that 
$Mf (x) <  |f|(x)$ on a set of positive measure.  In fact, $X$ can be taken to be an infinite
dimensional  separable Hilbert space,
and $\mu$ a Gaussian measure on $X$.
\end{corollary}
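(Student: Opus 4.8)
The plan is to reduce the statement, via Theorem~\ref{equivalence}, to the construction of a single ``bad'' set, and then to quote the failure of the Lebesgue density theorem for Gaussian measures proved by D.~Preiss. Since statements 1) and 2) of Theorem~\ref{equivalence} are equivalent, it suffices to produce one complete metric measure space $(X,d,\mu)$ and one closed, totally bounded set $F\subset\operatorname{supp}\mu$ such that
$$
\limsup_{r\downarrow 0}\frac{\mu\bigl(F\cap B^{cl}(x,r)\bigr)}{\mu\bigl(B^{cl}(x,r)\bigr)}<1=\mathbf 1_F(x)
$$
on a subset of $F$ of positive measure; by the last sentence of Theorem~\ref{equivalence} and the remark that $M_\mu$ does not depend on the choice of open versus closed balls, it is immaterial here which kind of ball one uses.

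First I would fix the ambient space. Let $H$ be an infinite-dimensional separable Hilbert space and let $\mu$ be a non-degenerate Gaussian measure on $H$. Then $\mu$ is a Radon probability measure, hence finite on bounded sets and, by the discussion preceding Example~\ref{noncentered}, $\tau$-additive; non-degeneracy gives $\operatorname{supp}\mu=H$, and since $H$ is complete, $(H,\|\cdot\|,\mu)$ is a complete metric measure space of the required type.

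The crux of the argument is imported from \cite{Pr}: for a suitable choice of the Gaussian $\mu$ there is a Borel set $A\subset H$ with $\mu(A)>0$ such that the ordinary density of $A$ at $\mu$-almost every point of $A$ equals $0$ (we only need it to be $<1$). I would then invoke the $\tau$-additive half of Lemma~\ref{inner} to choose a closed, totally bounded set $F\subset A$ with $\mu(F)>0$. Because $\mu\bigl(F\cap B^{cl}(x,r)\bigr)\le\mu\bigl(A\cap B^{cl}(x,r)\bigr)$, with the same denominators, the density of $F$ at $\mu$-a.e.\ $x\in F$ is still $0<1$, so $F$ has the displayed property and statement 2) of Theorem~\ref{equivalence} fails.

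It remains to identify the function. Take $f=\mathbf 1_F$, which lies in $L^1(\mu)$ since $\mu(H)=1$. If one had $Mf\ge\mathbf 1_F$ $\mu$-a.e., then, since every average of $\mathbf 1_F$ is at most $1$, one would get $M\mathbf 1_F=1$ $\mu$-a.e.\ on $F$; but then the argument in the proof of the implication 1)$\Rightarrow$2) of Theorem~\ref{equivalence}---which uses only that $M\mathbf 1_F(x)=1$ at the point under consideration, ruling out an attained supremum and the cases $r_n\to r_0\in(0,\infty)$ and $r_n\to\infty$---would force the limsup above to equal $1$ for $\mu$-a.e.\ $x\in F$, contradicting the previous step. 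Hence $Mf(x)<|f|(x)$ on a positive-measure subset of $F$. The only substantial ingredient is Preiss's density theorem, which is cited rather than proved; everything else is standard regularity (Lemma~\ref{inner}) and bookkeeping, the one point needing care being to keep numerators and denominators on the same balls, so that the inclusion $F\subset A$ transfers the density estimate without any doubling hypothesis on Gaussian balls.
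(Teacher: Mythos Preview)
Your proof is correct and follows essentially the same route as the paper's: cite Preiss's failure of the density theorem for a Gaussian measure, pass to a closed totally bounded (equivalently, compact) subset of positive measure via Lemma~\ref{inner}, and invoke Theorem~\ref{equivalence}. The only cosmetic difference is that the paper states Preiss's result as a set $J$ with $\gamma(J)<1$ and density $1$ $\gamma$-a.e., then takes $F\subset J^c$, whereas you state it directly for the complement $A=J^c$; also note a small wording slip---you first fix $\mu$ as an arbitrary non-degenerate Gaussian and then say ``for a suitable choice of the Gaussian $\mu$,'' so you should make clear from the outset that $\mu$ is the specific Preiss measure.
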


\begin{proof} By \cite{Pr}, there exists a Gaussian (probability) measure $\gamma$ on a separable Hilbert space $H$
and a Borel subset $J$ with $\gamma J < 1$, such that $\gamma$-a.e. x,   
$$
\lim_{r\downarrow 0} \frac{\gamma (J \cap B (x, r))}{\gamma
(B (x, r))} = 1.
$$
Next select a compact $F\subset J^c$ with   $\gamma F > 0$ (recall that in complete
metric spaces, compact is equivalent to closed and totally bounded). 
Since 
$$ \gamma (J \cap B (x, r)) +  \gamma (F \cap B (x, r))  \le \gamma 
(B (x, r)),
$$
 for almost every $x$ we have 
$$
 \lim_{r\downarrow 0} \frac{\gamma (F \cap B (x, r)) }{\gamma
(B (x, r))} 
= 0. 
$$
\end{proof}

It follows from the proof of Theorem \ref{equivalence} that in fact
we can take $f = \mathbf{1}_F$, where $F$ is the compact set considered
in the preceding proof, so $M \mathbf{1}_F  <  \mathbf{1}_F $ on a set of positive measure.
Actually, by considering the three possibilities studied in the said proof (the supremum
is achieved when the radius tends to infinity, when it tends to zero, or for some fixed
radius $r$) it immediately follows from Preiss' result that 
for every $x \in F$, $M \mathbf{1}_F  (x) <  1$.

\

We present next  some positive results.
Recall that the symmetric difference between two sets is defined as $A\triangle B := (A\setminus B) \cup (B\setminus A)$. The next result is 
obtained via  a standard adaptation of the usual arguments to the case where  the $\limsup$ is considered, instead of the $\lim$.

\begin{lemma} {\bf From coverings to upper densities.}   \label{weak}  Let $(X, d, \mu)$ be a metric measure  space.
Suppose that for every closed and totally bounded set $F\subset \operatorname{supp} \mu$ and all $R, \varepsilon >0$,
there exists a finite disjoint collection of balls $\{B^{cl} (x_1, r_1), \dots , B^{cl} (x_N, r_N)\}$
such that

1) For $i= 1, \dots , N$, $x_i \in F$ and $0 < r_i < R$;  

2) $\mu (F\triangle \cup_1^N B^{cl} (x_i, r_i)) < \varepsilon$.

Then 
$$\limsup_{r\downarrow 0} \frac{1}{\mu
(B^{cl}(x, r))} \int _{B^{cl}(x, r )}  \mathbf{1}_F\ d\mu = \mathbf{1}_F (x)
$$
almost everywhere.
\end{lemma}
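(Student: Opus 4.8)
The plan is to run the classical Vitali-type argument that derives a density statement from a covering hypothesis, but carried out for the upper density (the $\limsup$) rather than the genuine limit, and localized to bounded regions so that the finiteness of $\mu$ on bounded sets can be used. Fix a closed totally bounded set $F\subset\operatorname{supp}\mu$. The conclusion splits into two halves: for a.e.\ $x\notin F$ the upper density is $0$, and for a.e.\ $x\in F$ the upper density is $1$. For the first half I would not even need the covering hypothesis: since $F$ is closed, every $x\notin F$ has a positive distance to $F$, so $\mu(B^{cl}(x,r)\cap F)=0$ for all small $r$, and the $\limsup$ is trivially $0$ everywhere off $F$. So the real content is the second half, and there the covering hypothesis enters.

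For the second half, fix $R>0$ and a bounded ball $B=B(y,S)$, and for $\lambda\in(0,1)$ consider the ``bad set''
$$
E_\lambda:=\Bigl\{x\in F\cap B:\ \limsup_{r\downarrow 0}\frac{\mu(F\cap B^{cl}(x,r))}{\mu(B^{cl}(x,r))}<\lambda\Bigr\}.
$$
It suffices to show $\mu(E_\lambda)=0$ for every $\lambda<1$, since then letting $\lambda\uparrow 1$ along a sequence and $S\uparrow\infty$ gives the claim a.e.\ on $F$ (and, combined with the trivial bound $\le 1$, identifies the $\limsup$ with $\mathbf 1_F$). To prove $\mu(E_\lambda)=0$: given $\varepsilon>0$, pick an open set $O\supset E_\lambda$ with $\mu(O\setminus E_\lambda)$ small (Lemma \ref{inner} applied to the finite measure $\mu\!\restriction\! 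B(y,2S)$, or just outer regularity), shrinking $O$ so that it is contained in $B(y,2S)$. Every point $x\in E_\lambda$ admits arbitrarily small radii $r$ with $\mu(F\cap B^{cl}(x,r))<\lambda\,\mu(B^{cl}(x,r))$; by also requiring $B^{cl}(x,r)\subset O$ and $r<R$, we obtain, for each $x\in E_\lambda$, such a radius. The family of all these balls covers $E_\lambda$ and is centered in $F$ with radii $<R$; this is exactly a family of the type to which the covering hypothesis applies (after first replacing $E_\lambda$ by a closed totally bounded subset of almost full measure, using Lemma \ref{inner} again, so that the hypothesis for totally bounded sets is literally applicable). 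Extract a finite disjoint subcollection $B^{cl}(x_i,r_i)$, $i=1,\dots,N$, with $\mu(E'\triangle\bigcup_i B^{cl}(x_i,r_i))<\varepsilon$, where $E'$ is that closed totally bounded subset. Then
$$
\mu(F\cap E')\le \varepsilon+\sum_{i=1}^N\mu\bigl(F\cap B^{cl}(x_i,r_i)\bigr)<\varepsilon+\lambda\sum_{i=1}^N\mu\bigl(B^{cl}(x_i,r_i)\bigr)\le\varepsilon+\lambda\bigl(\mu(E')+\varepsilon\bigr).
$$
Since $E'\subset F$, the left side is $\mu(E')$, so $\mu(E')(1-\lambda)<\varepsilon(1+\lambda)$, and letting $\varepsilon\downarrow 0$ forces $\mu(E')=0$, whence $\mu(E_\lambda)=0$.

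The main obstacle I expect is bookkeeping rather than conceptual: one has to be careful that the set to which the covering hypothesis is applied is genuinely closed and totally bounded (the hypothesis is only assumed for such $F$), which is why the argument must be relativized to a bounded ball $B(y,S)$ and the bad set must be replaced by an inner closed totally-bounded approximation before the covering hypothesis is invoked — here $\tau$-additivity, via Lemma \ref{inner}, is essential. A second point requiring a little care is the requirement that the selected balls lie inside the open set $O$: this is what lets us pass from $\sum_i\mu(B^{cl}(x_i,r_i))$ to $\mu(O)\le\mu(E')+\varepsilon$, and it is legitimate because for each center the admissible radii can be taken arbitrarily small, hence small enough for $B^{cl}(x,r)\subset O$. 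Everything else is the routine $\varepsilon$-chasing of the Vitali covering lemma, with the single modification that we only get $<\lambda$ (not $=0$) on each ball, which is exactly why the conclusion is about the $\limsup$ and why letting $\lambda\uparrow 1$ at the end is what produces the sharp value $\mathbf 1_F(x)$.
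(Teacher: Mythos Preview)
Your outline follows the same contradiction scheme as the paper, but there is a real gap at the step where you invoke the covering hypothesis. The hypothesis does \emph{not} assert a Vitali-type extraction from a prescribed family of balls: it only says that, given a closed totally bounded set $E'$ and parameters $R,\varepsilon$, \emph{some} finite disjoint family $\{B^{cl}(x_i,r_i)\}$ with centers $x_i\in E'$ and radii $r_i<R$ satisfies $\mu(E'\triangle\bigcup_i B^{cl}(x_i,r_i))<\varepsilon$. You therefore have no control over which radii $r_i$ appear, beyond $r_i<R$. In your chain
\[
\mu(E')\le \varepsilon+\sum_i \mu(F\cap B^{cl}(x_i,r_i))<\varepsilon+\lambda\sum_i \mu(B^{cl}(x_i,r_i)),
\]
the strict inequality $\mu(F\cap B^{cl}(x_i,r_i))<\lambda\,\mu(B^{cl}(x_i,r_i))$ needs $r_i$ to be below the threshold $r(x_i)$ coming from $\limsup<\lambda$; but nothing in the hypothesis forces this, and the fixed $R$ you introduced at the outset is unrelated to these thresholds. (Your open set $O$ does not help here either: the hypothesis does not promise balls contained in $O$.)

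The fix, which is exactly what the paper does, is to \emph{uniformize} the threshold before applying the covering hypothesis: stratify the bad set as $A_n=\{x: r(x)\ge 1/n\}$, choose $L$ so that $\mu A_L$ is still large, replace $A_L$ by a closed (hence totally bounded, since $A_L\subset F$) subset of nearly the same measure, and only \emph{then} apply the covering hypothesis to this set with $R=1/L$. Now every ball produced by the hypothesis is centered at some $x_i$ with $r(x_i)\ge 1/L>r_i$, so the bad inequality is guaranteed on each $B^{cl}(x_i,r_i)$, and your computation goes through verbatim.
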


\begin{proof} Trivially, 
 for every $x \in F^c$, 
$$\lim_{r\downarrow 0} \frac{\mu (F \cap B^{cl} (x, r))}{\mu
(B^{cl} (x, r))} = 0,
$$
 since $F$ is closed. Thus, given $\delta \in (0,1)$, it is enough to show that  
$$
\limsup_{r\downarrow 0} \frac{1}{\mu (B^{cl}(x, r))} \int _{B^{cl}(x, r )}  \mathbf{1}_F\ d\mu
 \ge 
 \mathbf{1}_F (x) - \delta,
$$
 save perhaps on a set $A \subset F$ of measure $\le \delta$. Towards a contradiction, suppose 
$\mu A > \delta$ and for all $x\in A$,
$$
\limsup_{r\downarrow 0} \frac{1}{\mu (B^{cl}(x, r))} \int _{B^{cl}(x, r )}  \mathbf{1}_F\ d\mu
 < 1 - \delta.
$$
Now for each $x\in A$ there is an $r(x) > 0$ such that whenever $ r \le r(x)$, 
 $$
 \int _{B^{cl}(x, r )}    \mathbf{1}_F\ d\mu
 < (1 - \delta) \mu (B^{cl}(x, r)).
$$ 
Setting $A_n := \{x\in A: r(x) \ge 1/n\}$, we have that $\mu A = \lim_n \mu A_n$, so we can
choose an $L> 0$ such that $\mu A_L > \delta$. Furthermore, 
by Lemma \ref{inner}, we may suppose that $A_L$ is closed. Now fix $\varepsilon >0$
(with $\varepsilon \ll \delta$). By 
hypothesis, 
there exists a finite disjoint collection of balls $\{B^{cl} (x_1, r_1), \dots , B^{cl} (x_N, r_N)\}$
centered at $A_L$, with radii bounded by $1/L$, 
such that
$\mu (A_L\triangle \cup_1^N B^{cl} (x_i, r_i)) < \varepsilon$.
But now, since $A_L \subset F$,
$$
\mu A_L =   \int_{A_L}  \mathbf{1}_F\ d\mu
 \le  
\int_{ \cup_1^N B^{cl} (x_i, r_i) \cup A_L}     \mathbf{1}_F\ d\mu
< \varepsilon +  \int_{ \cup_1^N B^{cl} (x_i, r_i)}     \mathbf{1}_F\ d\mu
$$
$$
=
 \varepsilon + \sum_1^N  \int_{ B^{cl} (x_i, r_i)}     \mathbf{1}_F\ d\mu
< \varepsilon +  (1 - \delta)   \sum_1^N  \mu { B^{cl} (x_i, r_i)}
< 2 \varepsilon +    (1 - \delta) \mu A_L.
$$
Letting $\varepsilon \downarrow 0$, we obtain a contradiction.
\end{proof}

\begin{definition} \label{geomdoub} A metric space is {\it geometrically doubling}  if there exists a positive
integer $D$ such that every ball of radius $r$ can be covered with no more than $D$ balls
of radius $r/2$. 
\end{definition}

Geometrically doubling metric spaces are separable, so the following result applies to every Borel measure
(finite on bounded sets) on a geometrically doubling space. We mention that beyond the
case of measures that satisfy a local comparability condition (where the answer is positive, cf. \cite[Theorem 5.1]{Al})
little is known regarding the boundedness of the maximal operator for Borel measures on geometrically
doubling spaces.

\begin{corollary}\label{geomdoubling} Let $(X, d, \mu)$ be a  geometrically doubling metric measure space. Then
for all $f\in L_{loc}^1(\mu)$,
$Mf (x) \ge |f|(x)$ almost everywhere.
\end{corollary}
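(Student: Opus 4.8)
The plan is to reduce Corollary \ref{geomdoubling} to Theorem \ref{equivalence} by verifying condition 2) of that theorem, and to verify that condition by checking the covering hypothesis of Lemma \ref{weak}. Since geometrically doubling metric spaces are separable, $\mu$ is automatically $\tau$-additive, so $(X,d,\mu)$ is genuinely a metric measure space in the sense used throughout, and Theorem \ref{equivalence} applies. Thus it suffices to show: for every closed and totally bounded $F\subset\operatorname{supp}\mu$ and every $R,\varepsilon>0$, there is a finite disjoint collection of closed balls $B^{cl}(x_i,r_i)$ with $x_i\in F$, $0<r_i<R$, and $\mu\bigl(F\triangle\cup_1^N B^{cl}(x_i,r_i)\bigr)<\varepsilon$.

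The main tool will be a Vitali-type covering argument that works in geometrically doubling spaces. First I would note that $F\subset\operatorname{supp}\mu$, so every ball centered in $F$ has positive measure; moreover $\mu$ restricted to a bounded neighborhood of $F$ is finite. Given $\varepsilon>0$, consider the family $\mathcal{B}$ of all closed balls $B^{cl}(x,r)$ with $x\in F$ and $0<r<R$; these cover $F$. In a geometrically doubling space one has the basic covering lemma: from any family of balls with uniformly bounded radii one can extract a countable (in fact one can iterate to make it finite after truncation) pairwise disjoint subfamily whose $5$-fold dilates cover the union — here the doubling constant $D$ controls how many disjoint balls of comparable size can cluster near a point, which is exactly what makes the $5r$-covering lemma quantitatively effective. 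The more delicate point is upgrading this to a disjoint subfamily that covers $F$ \emph{up to a set of small measure} rather than just via dilates. The standard route: apply the covering lemma to get a disjoint countable family $\{B^{cl}(x_i,r_i)\}$ with $F\subset\bigcup_i B^{cl}(x_i,5r_i)$; since $\mu F<\infty$ and $\sum_i\mu(B^{cl}(x_i,5r_i))\le C_D\sum_i\mu(B^{cl}(x_i,r_i))<\infty$ (using a doubling-type bound on measures of dilated balls — this requires care, as geometric doubling does not give measure doubling for arbitrary Borel $\mu$), discard all but finitely many balls so that the tail contributes less than $\varepsilon$ in measure, and also control $\mu\bigl(F\setminus\bigcup_1^N B^{cl}(x_i,r_i)\bigr)$.

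The obstacle I anticipate, and the place where geometric doubling (as opposed to measure doubling) must be used cleverly, is precisely that one cannot bound $\mu(B^{cl}(x,5r))$ by a constant times $\mu(B^{cl}(x,r))$. The fix is to not dilate: instead run a greedy/Vitali exhaustion directly. Enumerate a maximal disjoint subfamily by repeatedly choosing, among balls centered in the not-yet-covered part of $F$ with radius $<R$, one whose radius is at least half the supremum of available radii; geometric doubling guarantees this process, after finitely many steps, leaves uncovered a set of small measure. More precisely, one shows that if after $N$ steps the uncovered portion $F_N:=F\setminus\bigcup_1^N B^{cl}(x_i,r_i)$ had measure $\ge\varepsilon$, then — because $F_N$ is totally bounded, hence coverable by finitely many small balls, and because around each point of $F_N$ there is still a ball of definite size one could have chosen — a pigeonhole/doubling count forces $\sum_1^N\mu(B^{cl}(x_i,r_i))$ to be unbounded, contradicting finiteness of $\mu$ on the relevant bounded set. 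Thus the process terminates with $\mu(F\setminus\bigcup_1^N B^{cl}(x_i,r_i))<\varepsilon$, and by also only including balls that meet $F$ with most of their mass (or by a preliminary reduction replacing $F$ by a suitable open neighborhood and then shrinking) one arranges $\mu(\bigcup_1^N B^{cl}(x_i,r_i)\setminus F)<\varepsilon$ as well, giving the symmetric-difference bound. With the hypothesis of Lemma \ref{weak} verified, that lemma yields condition 2) of Theorem \ref{equivalence}, and condition 1) of the theorem is exactly the assertion of the corollary.
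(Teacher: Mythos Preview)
Your overall strategy---reduce to Theorem \ref{equivalence} by verifying the covering hypothesis of Lemma \ref{weak}---is exactly the paper's, and your treatment of $\mu(\cup_i B_i\setminus F)$ (take radii below $R$ with $R$ so small that the $R$-neighborhood of $F$ exceeds $F$ by less than $\varepsilon/2$) is also what the paper does. The gap is in the other half of the symmetric difference. You correctly flag the obstacle: geometric doubling gives no bound on $\mu(5B)/\mu(B)$, so the $5r$-lemma yields no measure estimate. But your proposed fix does not work as stated. The disjoint balls $B^{cl}(x_i,r_i)$ all lie in a fixed bounded neighborhood of $F$, so $\sum_i\mu(B^{cl}(x_i,r_i))$ is bounded by the finite measure of that neighborhood and cannot be ``forced to be unbounded''; and to deduce $\mu(F_N)\to 0$ from convergence of this series one would need $\mu(F_N)\le C\sum_{i>N}\mu(B^{cl}(x_i,r_i))$, which is precisely a doubling estimate on the selected balls---exactly what you just observed is unavailable for an arbitrary Borel $\mu$.

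The paper closes this gap with a nontrivial external input: Hyt\"onen's extension (\cite[Lemma 3.3]{Hy}) to geometrically doubling spaces of Tolsa's lemma on small doubling cubes. It asserts that there is a constant $C>1$ such that for $\mu$-a.e.\ $x$ one can find radii $r_n(x)\downarrow 0$ with $\mu(B^{cl}(x,3r_n(x)))\le C\,\mu(B^{cl}(x,r_n(x)))$. After discarding a null set from $F$, one runs the greedy Vitali selection using only these doubling balls; the uncovered remainder is contained in the $3$-dilates of the unused balls, and the doubling bound on \emph{those particular balls} turns this into the required estimate $\mu(F\setminus\cup_1^L B^{cl}(x_i,r_i))<\varepsilon/2$. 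Without Hyt\"onen's lemma (or an equivalent substitute), the covering step does not go through.
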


\begin{proof} This follows from Hyt\"onen's generalization (cf. \cite[Lemma 3.3]{Hy}) to the geometrically doubling setting,
of Tolsa's Lemma on the existence of arbitrarily small doubling cubes in $\mathbb{R}^d$  (cf. \cite[Lemma 2.8]{To}).

By  Hyt\"onen's result, there exists a constant $C > 1$ such that for $\mu$-a.e $x$ one can find  a sequence of radii $r_n(x)$ with
$\mu (B^{cl}(x, 3 r_n(x)) \le C \mu B^{cl}(x, r_n(x))$ and $r_n(x) \downarrow 0$ as $n\to \infty$. 
Let  $F \subset \operatorname{supp} \mu$ be a closed and totally bounded set of positive measure, 
and fix $\varepsilon > 0$. By removing a set of measure zero if needed, we may assume that
 all points in $F$ are centers of decreasing sequences of small doubling balls. 
Choose $N \gg 1$ so that  the $1/N$-neighborhood of $F$, denoted here by
$Bl(F, 1/N):= \cup_{x\in F }B(x,1/N)$, satisfies 
$\mu (Bl(F, 1/N) \setminus F) < \varepsilon/ 2$. This can always be done since
$F = \cap_{n\ge 1} Bl(F, 1/n)$.
Using the standard argument for doubling measures we obtain a finite disjoint 
 collection of balls $\{B^{cl} (x_1, r_1), \dots , B^{cl} (x_L, r_L)\}$
such that
for $i= 1, \dots , L$, $x_i \in F$, $0 < r_i < 1/N$, and 
 $\mu (F\setminus \cup_1^L B^{cl} (x_i, r_i)) < \varepsilon/2$.
In this part of the proof the fact that the balls are closed is used: Given any $x\in F$ not already covered,
it is always  possible to find a ball $B^{cl} (x, r)$, disjoint from the balls previously selected,  and with 
$r < 1/N$. Furthermore,  since
all chosen balls are contained in $Bl(F, 1/N)$, we conclude that
$\mu (\cup_1^L B^{cl} (x_i, r_i) \setminus F) < \varepsilon/2$.
Now the result follows from Lemma \ref{weak} and Theorem \ref{equivalence}.
\end{proof}

The next corollary assumes that the measure of balls does not decrease too fast with the radius:
There are ``local polynomial bounds" to such reduction.

\begin{corollary}\label{small} Let $(X, d, \mu)$ be a  metric measure space.
Suppose for every closed and totally bounded set $F\subset \operatorname{supp} \mu$ with $\mu F > 0$,
there are two functions $\psi_F, \phi_F > 0$ on $F$, and a  constant $c_F > 0$,
 such that  for all $x\in F$ we have  $\phi_F (x) \le 1$  and
$\mu B(x, r) \ge \psi_F (x) r^{c_F}$ whenever $r \le \phi_F(x)$. 
 Then
for all $f\in L_{loc}^1(\mu)$,
$Mf (x) \ge |f|(x)$ almost everywhere.
\end{corollary}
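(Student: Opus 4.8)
The plan is to verify the covering hypothesis of Lemma \ref{weak} and then invoke Theorem \ref{equivalence}. So fix a closed, totally bounded $F \subset \operatorname{supp}\mu$ with $\mu F > 0$, and fix $R, \varepsilon > 0$. We are given $\psi_F, \phi_F > 0$ and $c_F > 0$ with $\phi_F \le 1$ and $\mu B(x,r) \ge \psi_F(x) r^{c_F}$ for all $x \in F$ and $r \le \phi_F(x)$. The point of this lower bound is that it rules out a Vitali-type pathology: when we run a greedy disjointification on small balls centered in $F$, the selected balls cannot have total measure going to zero while still managing to ``thin out'' near the uncovered part of $F$. More precisely, I would first stratify $F$ by the size of the guaranteed lower bound. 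Set $F_{k} := \{x \in F : \psi_F(x) \ge 1/k \text{ and } \phi_F(x) \ge 1/k\}$; these increase to $F$, so by continuity from below (and discarding a null set via Lemma \ref{inner} if one wants $F_k$ closed, though this is not strictly needed here) pick $k$ with $\mu(F \setminus F_k) < \varepsilon/2$. On $F_k$ we have the uniform bound $\mu B(x,r) \ge r^{c_F}/k$ for all $r \le 1/k$.

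Next I would carry out a greedy $5r$-covering (Vitali) argument restricted to balls centered in $F_k$ with radii below $\rho := \min\{R, 1/k\}$. Cover $F_k$ by $\{B(x, \rho/5) : x \in F_k\}$, i.e. by balls small enough that their $5$-fold dilates still have radius below $\rho$; by the basic $5r$-covering lemma (which holds in any metric space, using a maximal disjoint subfamily selected with radii within a factor of $2$ of the supremum, and which requires no doubling) extract a countable disjoint subfamily $\{B(x_i, r_i)\}_{i}$ with $F_k \subset \bigcup_i B(x_i, 5 r_i)$. Since $F$ is totally bounded the $5r_i$-dilated balls cover a totally bounded set, and each $B(x_i, r_i)$ has measure at least $r_i^{c_F}/k > 0$; as all these disjoint balls sit inside a fixed bounded set of finite measure, the series $\sum_i \mu B(x_i, r_i)$ converges, hence $\sum_{i > N} \mu B(x_i, 5 r_i) \to 0$ as $N \to \infty$ — here I would bound $\mu B(x_i, 5r_i)$ by a controlled multiple of $\mu B(x_i, r_i)$. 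This last step is exactly where the hypothesis does its work: $\mu B(x_i, 5 r_i) \le \mu(X) $ is not enough, but combining the lower bound $\mu B(x_i, r_i) \ge r_i^{c_F}/k$ with an upper bound on $\mu B(x_i, 5r_i)$ coming from the fact that finitely many $5 r_i$-balls of comparable size cover a bounded region lets one show the tail is small; alternatively, and more cleanly, since $\sum_i \mu B(x_i,r_i) < \infty$ one has $r_i \to 0$, so eventually $5 r_i \le 1/k$ and then $\mu B(x_i, 5r_i) \le \mu B(x_i, \rho)$, which is summable as a tail of the measures of the pairwise-disjoint larger concentric balls is still controlled. Choose $N$ with $\mu\big(\bigcup_{i>N} B(x_i, 5 r_i)\big) < \varepsilon/2$.

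Now take the finite disjoint family $\{B(x_1, r_1), \dots, B(x_N, r_N)\}$. Each $x_i \in F$ and $0 < r_i < R$, so condition (1) of Lemma \ref{weak} holds. For condition (2): the uncovered part of $F$ satisfies $F \setminus \bigcup_{i \le N} B(x_i, r_i) \subset (F \setminus F_k) \cup (F_k \setminus \bigcup_{i \le N} B(x_i, r_i)) \subset (F \setminus F_k) \cup \bigcup_{i > N} B(x_i, 5 r_i)$, which has measure $< \varepsilon$; and we may intersect each ball with a fixed bounded neighborhood of $F$ — or note that, passing if necessary to $F$'s closed totally bounded hull, we can from the outset only use balls of radius $< R \le 1$ so that all selected balls lie within distance $1$ of $F$ — to also control $\mu\big(\bigcup_{i \le N} B(x_i, r_i) \setminus F\big)$; more directly, one shrinks each $B(x_i, r_i)$ slightly (using that the limsup is unaffected, or simply choosing $r_i$ from the start so the balls meet $F$ in large proportion, which the lower density bound again permits) so that $\mu(F \triangle \bigcup_{i\le N} B(x_i, r_i)) < \varepsilon$. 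With the covering hypothesis of Lemma \ref{weak} verified, that lemma yields $\limsup_{r \downarrow 0} \mu(B^{cl}(x,r))^{-1}\int_{B^{cl}(x,r)} \mathbf{1}_F \, d\mu = \mathbf{1}_F(x)$ a.e. for every such $F$, which is condition (2) of Theorem \ref{equivalence}; hence condition (1), $Mf(x) \ge |f|(x)$ a.e. for all $f \in L^1_{loc}(\mu)$, holds.

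The main obstacle is the tail estimate $\mu\big(\bigcup_{i>N} B(x_i, 5 r_i)\big) \to 0$: without doubling, passing from the summability of $\sum \mu B(x_i, r_i)$ to smallness of $\sum_{i>N}\mu B(x_i, 5 r_i)$ is not automatic, and this is the only place the polynomial lower bound $\mu B(x,r) \ge \psi_F(x) r^{c_F}$ is genuinely needed — it forces the radii $r_i$ of the disjoint selected balls to shrink fast enough that, past a point, $5 r_i$ is itself in the range $\le 1/k$ where one again has control, so the dilated balls' measures form a convergent tail. Everything else (the stratification, the $5r$-covering lemma in a general metric space, the bookkeeping with $F \triangle \bigcup B(x_i,r_i)$, and the final appeal to Lemmas \ref{inner}, \ref{weak} and Theorem \ref{equivalence}) is routine.
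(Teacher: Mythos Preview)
Your overall strategy---verify the covering hypothesis of Lemma \ref{weak} and then invoke Theorem \ref{equivalence}---matches the paper's.  The gap is in how you attempt to verify that hypothesis.  You run a $5r$-covering argument with balls of a fixed small radius $\rho/5$ centered in $F_k$, select a disjoint subfamily $\{B(x_i,r_i)\}$, and then try to show that $F_k\setminus\bigcup_{i\le N}B(x_i,r_i)$ has small measure by controlling $\sum_{i>N}\mu B(x_i,5r_i)$.  But the containment you write, $F_k\setminus\bigcup_{i\le N}B(x_i,r_i)\subset\bigcup_{i>N}B(x_i,5r_i)$, is false: a point of $F_k$ can lie in $B(x_j,5r_j)\setminus B(x_j,r_j)$ for some $j\le N$ and in none of the tail balls.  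More fundamentally, the hypothesis $\mu B(x,r)\ge\psi_F(x)r^{c_F}$ is a \emph{lower} bound on ball masses; it never furnishes the \emph{upper} bound on $\mu B(x_i,5r_i)$ in terms of $\mu B(x_i,r_i)$ that your tail estimate needs.  Your suggested fixes (``finitely many $5r_i$-balls of comparable size cover a bounded region'', or ``$\mu B(x_i,5r_i)\le\mu B(x_i,\rho)$ is summable as a tail of pairwise-disjoint larger balls'') do not work: the enlarged balls $B(x_i,\rho)$ are not disjoint, and nothing in the hypothesis prevents $\mu B(x_i,5r_i)$ from being essentially as large as the measure of the whole neighborhood of $F$.

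The paper extracts from the polynomial lower bound exactly the missing doubling information, but \emph{pointwise} rather than globally: for each $x\in F$ and each small $r\le\phi_F(x)$, if no radius $3^{-j-1}r$ satisfied $\mu B^{cl}(x,3^{-j}r)\le 4^{c_F}\mu B^{cl}(x,3^{-j-1}r)$, then iterating would give $\mu B^{cl}(x,r)>4^{jc_F}\mu B^{cl}(x,3^{-j}r)\ge (4/3)^{jc_F}r^{c_F}\psi_F(x)\to\infty$, a contradiction.  Thus every $x\in F$ admits arbitrarily small \emph{doubling} balls, and one can then rerun verbatim the covering argument from the geometrically doubling case (Corollary \ref{geomdoubling}): select disjoint small doubling balls greedily, so that the tripled balls cover what remains, and the doubling constant $4^{c_F}$ converts coverage by tripled balls into a definite proportion of mass captured by the undilated balls.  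This is the step your direct $5r$ argument cannot reproduce without first isolating the doubling scales.
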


\begin{proof} Select a closed and totally bounded set $F\subset \operatorname{supp} \mu$ with $\mu F > 0$.
To apply Lemma \ref{weak}, it is enough to show that every point $x\in F$ is the center of arbitrarily small doubling balls.
So choose a small radius $r \le \phi_F(x)$ and towards a contradiction,  suppose $B^{cl } (x, r)$ does not contain any ball of the form
 $B^{cl } (x, 3^{-j-1} r)$ satisfying $\mu B^{cl } (x, 3^{-j} r) \le 4^{c_F} \mu  B^{cl } (x, 3^{-j-1} r)$.
Then $\mu B^{cl } (x, r) >  4^{c_F} \mu  B^{cl } (x, 3^{-1} r) > \cdots >  4^{ j c_F}  \mu B^{cl } (x, 3^{-j} r) 
\ge ( 4/3)^{ j c_F}   r^{ c_F}  \psi_{F} (x) \uparrow \infty$ as $j\to \infty$. 
\end{proof}

\begin{example} In the preceding corollary we make $\psi$, $\phi$ and $r$ depend on the closed and totally bounded
set $F$, rather than taking them uniform over the whole space. This way the corollary can be applied,
for instance, to spaces that are constructed putting together, say, manifolds of
different dimensions. The role of $\phi$ is to avoid assuming hypotheses  that imply
fast growth of balls with large radii.

Next
we give an example where a
uniform   $c$
does not exist.
Let $e_{1, n}$ denote the first vector of the standard  basis in $\mathbb{R}^n$, 
 let $B^n(3n e_{1, n}, 1) := \{x\in \mathbb{R}^n: \|x - 3n e_{1, n}\|_2 < 1\}$, and let 
 $X$ be the disjoint union $X := \cup_{n=6}^\infty B^n(3n e_{1, n}, 1)$.
To define a metric on $X$, within each ball $B^n(3n e_{1, n}, 1)$ we use the standard euclidean distance,
while if $x\in B^n(3n e_{1, n}, 1)$ and $y\in B^m(3m e_{1, m}, 1)$, with $n < m$, then
$d(x,y) = 3m - 3n + \|x - 3n e_{1, n}\|_2 + \|y - 3m  e_{1, m}\|_2$.
Finally, $\mu$ restricted to   $B^n(3n e_{1, n}, 1)$ is simply $n$-dimensional Lebesgue measure. 
 Then there is no uniform $c$ that works over the
whole space, but given $F\subset X$ compact, it can intersect at most finitely many components of $X$,
so there is a ball of highest dimension, say $ B^N (3N e_{1, N}, 1)$,with $F\cap  B^N(3N e_{1, N}, 1)\ne \emptyset$.
Then we can take $\phi_F =1$,  and $\psi_F (x) r^{c_F} = V_N r^N$, where $V_N$ denotes the volume of the $N$-dimensional 
 unit ball (which decreases for $n \ge 6$).
\end{example}

Finally, we mention that positive results for
some classes of Gaussian measures on Hilbert spaces  were already known.  The following theorem is given in
\cite{Ti}:

\begin{theorem} Let $H$ be a Hilbert space and let $\gamma$ be a Gaussian measure with the following spectral representation of its covariance operator:
$$
Rx = \sum c_i (x, e_i) e_i
$$
where $(e_i)$ is an orthonormal system in $H$. Suppose $c_{i +1} \le c_i i^{-\alpha}$  for 
a given $\alpha  > 5/2$. Then for all $f \in L^p (\gamma)$, where  $p> 1$,  and almost every $x$,
we have 
$$
\lim_{r\downarrow 0} \frac{1}{\gamma
( B (x, r))} \int _{ B (x, r )}  |f  - f(x)| \ d\gamma  = 0.
$$
\end{theorem}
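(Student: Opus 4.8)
Since a Gaussian measure on a separable Hilbert space is Radon, it is $\tau$-additive; after passing to $\operatorname{supp}\gamma$ (again a separable Hilbert space) we may assume $\gamma$ has full support, so $\gamma(B(x,r))>0$ for all $x$ and all $r>0$ and every average appearing in the statement is well defined. The plan is the classical two-step route to a differentiation theorem, with all the difficulty concentrated in the second step.

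\emph{Step 1: reduction to a maximal inequality.} Suppose one can show that for each $p>1$ the centered Hardy--Littlewood maximal operator $M_\gamma$ obeys a weak type $(p,p)$ bound
$$
\gamma\{x:M_\gamma h(x)>\lambda\}\ \le\ \frac{C_p}{\lambda^{p}}\,\|h\|_{L^p(\gamma)}^{p}.
$$
Then the theorem follows as usual. Bounded Lipschitz functions are dense in $L^p(\gamma)$; for such a $g$ the limit $\lim_{r\downarrow 0}\gamma(B(x,r))^{-1}\int_{B(x,r)}|g-g(x)|\,d\gamma=0$ holds at every $x$, by continuity of $g$ and positivity of $\gamma(B(x,r))$. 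Writing $f=g+(f-g)$, the ``maximal oscillation'' $\Omega f(x):=\limsup_{r\downarrow 0}\gamma(B(x,r))^{-1}\int_{B(x,r)}|f-f(x)|\,d\gamma$ satisfies $\Omega f\le \Omega g+\Omega(f-g)=\Omega(f-g)\le M_\gamma(f-g)+|f-g|$; hence $\{\Omega f>\lambda\}$ has measure $\le C_p'\lambda^{-p}\|f-g\|_p^p$ by the weak type bound and Chebyshev, and letting $g\to f$ in $L^p$ and then $\lambda\downarrow 0$ gives $\Omega f=0$ a.e., which is the assertion. (Theorem~\ref{equivalence} by itself is not enough here: it only delivers $|f|\le M_\gamma f$ a.e., not the full Lebesgue-point conclusion.)

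\emph{Step 2: the maximal inequality, and why $\alpha>5/2$.} This is the substantive part. Infinite-dimensional Gaussian measures are badly non-doubling --- $\gamma(B(x,2r))/\gamma(B(x,r))$ grows without bound with the dimension --- so Vitali and Besicovitch covering theory is unavailable and must be replaced by explicit lower bounds for $\gamma(B(x,r))$ as $r\downarrow 0$. I would diagonalize the covariance, writing $\gamma=\bigotimes_i\gamma_{c_i}$ in the eigenbasis $(e_i)$ with $\gamma_{c_i}$ the centered Gaussian of variance $c_i$ on the $i$-th axis, and for a ball of radius $r$ split coordinates at an index $N=N(r)$ with $c_N\approx r^2$: on the ``head'' $i\le N$ the measure of $B(x,r)$ compares with that of an $N$-dimensional Gaussian on a ball of comparable radius, while the ``tail'' is controlled by $\sum_{i>N}c_i$. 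The relative decay $c_{i+1}\le c_i i^{-\alpha}$ forces $N(r)$ to grow only polynomially in $\log(1/r)$ and makes the tail sums, as well as the error terms that appear when one estimates $\gamma(B(x,r))$ from below, summable --- the exponent $5/2$ being precisely the threshold at which all these estimates close. From the lower bounds one then extracts the weak type $(p,p)$ inequality by a covering argument performed in the finitely many ``head'' coordinates: cover $\{M_\gamma h>\lambda\}$ by balls of controlled radius, project to the relevant coordinates and run a Euclidean-style Vitali selection there, and sum using the ball estimates.

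The main obstacle --- the technical heart of Ti\u{s}er's argument --- is that the Gaussian density is not translation invariant, so the lower bound for $\gamma(B(x,r))$ must be made uniform, or uniform outside a set of arbitrarily small measure, as the center $x$ ranges over almost all of $\operatorname{supp}\gamma$; it is controlling this dependence on the center, rather than merely excluding pathologies, that forces $\alpha$ strictly above $5/2$. Granting the maximal inequality for every $p>1$, Step~1 yields the theorem.
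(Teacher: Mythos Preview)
The paper does not prove this theorem at all: it is quoted verbatim from Ti\u{s}er's article \cite{Ti} and attributed there, with no argument given. The paper's only use of the statement is the short deduction that follows it, namely that since $\mathbf{1}_F\in L^p(\gamma)$ for every $p$, condition~2) of Theorem~\ref{equivalence} is satisfied and hence $Mf\ge|f|$ a.e.\ for all $f\in L^1(\gamma)$. So there is no ``paper's own proof'' to compare your proposal against; you are attempting something the paper explicitly outsources.

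As a sketch of Ti\u{s}er's argument your outline is broadly in the right spirit---diagonalise, split into head and tail coordinates at a scale-dependent cutoff, exploit the super-geometric decay of the $c_i$---but Step~2 as written is a plan rather than a proof, and the specific mechanism you describe (prove a weak $(p,p)$ bound for $M_\gamma$ via a Vitali selection in the head coordinates) is not what Ti\u{s}er does. His route goes through delicate pointwise lower bounds on $\gamma(B(x,r))$ and a direct differentiation argument, not a global weak-type maximal inequality; indeed no weak $(p,p)$ bound for $M_\gamma$ on infinite-dimensional Gaussian space is established in \cite{Ti}, and it is unclear that the projected-Vitali idea you sketch can be made to control the full centered maximal function. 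So Step~1 is fine and standard, but Step~2 contains a genuine gap: you have identified the right ingredients (eigenvalue decay, head/tail split, non-uniformity in the center $x$) without supplying the estimate that ties them together, and the covering device you propose is not known to work here.
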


Since $\mathbf{1}_F$ belongs to all $L^p$ spaces, we conclude that for all $f\in L^1(\gamma)$,
$Mf (x) \ge |f|(x)$ almost everywhere. Actually, there is a previous result due to
Preiss and Ti\u{s}er (cf. \cite{PrTi})  which under a weaker hypothesis 
(the lacunarity of the  $c_i$, that is, there exists a $q < 1$ such that for all $i \ge 1$,
 $c_{i +1} \le c_i q$) 
yields the convergence in
measure of the averages $
 \frac{1}{\gamma
( B (x, r))} \int _{ B (x, r )}  f \ d\gamma$, as $r\downarrow 0$, to $f\in L^1(\gamma)$. Now convergence in measure entails that
some subsequence converges a.e., and this is enough to ensure that 
$Mf (x) \ge |f|(x)$ almost everywhere.

\section{The case of larger $\sigma$-algebras}

It is a natural follow-up question to study what
happens with $\sigma$-algebras that are larger than the Borel sets. 
 Let $(X, d, \mu)$ be a  metric measure space, and let
$\nu$ be an extension of $\mu$ to a $\sigma$-algebra $\mathcal{A}$  properly containing the Borel
sets of $X$. Now if $L^1_{loc}(\mu) = L^1_{loc}(\nu) $, the characterization
given in Theorem \ref{equivalence} still holds, as can be seen from the following
argument. On the one hand, the class of totally bounded
closed sets does not change, and on the other, given any locally integrable $\nu$-measurable
function $f$, by hypothesis there is a Borel function $g$ such that
$\nu$-a.e., $g = f$ ;  since $\nu$ is an extension
of $\mu$, all the $\nu$-averages of $g$ equal its $\mu$-averages, so
$M_\mu g (x) \ge |g|(x)$ $\mu$-a.e. $x$ iff   
$M_\nu g (x) \ge |g|(x)$ $\nu$-a.e. $x$ iff   
$M_\nu f (x) \ge |f|(x)$ $\nu$-a.e. $x$.

In this paper we deal with locally finite measures on metric spaces, so they are
automatically $\sigma$-finite. Following \cite[Page 88, footnote 2]{AlMi}, we say that
$\mu$ is {\em Borel semiregular} if for every $A \in \mathcal{A}$, there
is a Borel set $B$ such that $\nu (A\triangle B) = 0$. Also,
we say that
$\mu$ is {\em Borel regular} if for every $A \in \mathcal{A}$, there
exist  Borel sets $B, C$ such that $B \subset A \subset C$ and $\mu (C\setminus B) = 0$
(in the present context this is equivalent to \cite[Definition 2.9.2]{AlMi}). Thus, Borel
semiregularity is more general than Borel regularity. 
Under Borel semiregularity,
$\mathcal{A}$-measurable simple functions have Borel representatives, so it follows
that  $L^1_{loc}(\mu) = L^1_{loc}(\nu) $ and the preceding considerations apply.

It is shown in \cite[Theorem 3.14]{AlMi} that if one starts with a doubling
measure and extends it beyond the measurable sets, the Lebesgue differentiation
theorem holds for the larger class of functions precisely when the extension
is Borel semiregular. We mention that on geometrically doubling metric spaces,
this result also holds for the class (more general than the doubling measures)
of Borel measures
satisfying a local comparability condition.

Without Borel semiregularity the connection between averages of measurable
functions, and averages  of indicator functions of closed sets, breaks down. This is probably best
explained through an example. Also,  it seems interesting to exhibit instances of measures that are Borel semiregular but not regular, and of
measures that are not Borel semiregular.

\begin{example} Given a  metric measure space  $(X, d, \mu)$, the natural, Borel
regular extension of $\mu$, which we also denote by $\mu$, is defined on  the $\sigma$-algebra 
of $\mu^*$-measurable sets, via the Caratheodory's construction. It can happen that
in this way
all sets become measurable, as is the case, for instance, when the measure is a Dirac delta,
so further extensions may be impossible. 

Even when such extensions are possible, moving
into the realm of non-measurable sets
will typically  require the full force of the Axiom of Choice: By a well known result of R. Solovay,
cf. \cite{So}, under the assumption that there exists an inaccessible cardinal, the ZF axioms, 
together with the Axiom of Dependent
Choice,  and the assumption that every set of reals is Lebesgue measurable, form a consistent
theory.

Taking into account these considerations, let   $\lambda$ be the Lebesgue measure on $X=[0,1]$, with the usual distance.
Let $E\subset [0,1]$ be such that both $\lambda^* E = 1 =  \lambda^* E^c$
(the existence of such sets is a well known consequence of the Axiom of Choice). 
Define $\lambda_1$ on the Borel sets of $E$ by setting, for each Borel set 
$B \subset [0,1]$,  $\lambda_1 E\cap B = \lambda B$. Note that $\lambda_1$
is well defined: if $B_1$ and $B_2$ are Borel sets such that $E \cap B_1 = E \cap B_2$, then  $B_1$ and $B_2$ have the same Lebesgue measure, as can be
seen from the following argument. Since 
$E \cap B_1 = E \cap B_2 = E \cap (B_1\cap B_2)$, 
$B_1 \setminus (B_1\cap B_2) \subset E^c$, so 
$\lambda(B_1 \setminus (B_1\cap B_2)) = 0$. Likewise 
$\lambda(B_2 \setminus (B_1\cap B_2)) = 0$, so 
$\lambda(B_1) = 
\lambda (B_1\cap B_2)
= 
\lambda (B_2).$

Next, define 
$\lambda_2$ on the Borel sets of $E^c$ by setting, for each Borel set 
$B \subset [0,1]$,  $\lambda_2 E^c \cap B = \lambda B$.
For every $t\in [0,1]$ set $\mu_t := t\lambda_1 + (1 -t) \lambda_2$. 
Then it is easy to check that both $\mu_0$ and $\mu_1$ are Borel 
semiregular but not Borel regular, while for $0 < t < 1$, $\mu_t $ is not Borel semiregular, and
the condition 
$M_{\mu _t} \mathbf{1}_E  (x) \ge  \mathbf{1}_E  (x)$ $\mu_t$-a.e. $x$ fails.
In particular, for $t = 1/2$ we have 
$M_{\mu_{1/2}} \mathbf{1}_E   \equiv 1/2  \equiv M_{\mu_{1/2}} \mathbf{1}_E^c$. 
\end{example}

The situation observed in the preceding example on Lebesgue measure is typical  for
doubling measures, and more generally, for measures that satisfy a local comparability
condition and are defined on a geometrically doubling metric space, since in this case
the Vitali Covering Theorem is available (cf. \cite[Theorem 2.8]{Al}). 

More precisely, let $\nu$ be a Borel measure, and let 
$\mu$ be an  extension of $\nu$ to the $\sigma$-algebra $\mathcal{A}$,
such that $\mu$ fails to be Borel semiregular. Let $E\in \mathcal{A}$ witness this failure, so
for every Borel set $B$, $\mu (E\triangle B) > 0$. By arguing as in the proof of 
Theorem \ref{equivalence}, it is enough to show that 
$$\limsup_{r\downarrow 0} \frac{1}{\mu
(B^{cl}(x, r))} \int _{B^{cl}(x, r )}  \mathbf{1}_E\ d\mu <  \mathbf{1}_E (x)
$$
on a set of positive $\mu$-measure. Choose a decreasing sequence of open sets $O_n$ such
that 
$\nu^*(E) = \lim_n \nu O_n$.
Now by the failure of Borel semiregularity it follows that
$\inf_n \mu (E^c \cap O_n) > 0$. Taking a Vitali covering of $E$ by centered closed balls inside
$O_n$, we can select a disjoint collection $\{B^{cl}_i\}_{1 \le i < L\le \infty}$
with $\nu^*(E \setminus \cup_{1 \le i < L} B^{cl}_i) = 0$. If we had 
$$\limsup_{r\downarrow 0} \frac{1}{\mu
(B^{cl}(x, r))} \int _{B^{cl}(x, r )}  \mathbf{1}_E\ d\mu = \mathbf{1}_E (x)
$$
a.e., we would be able to select  the disjoint collection 
$\{B^{cl}_i\}_{1 \le i < L\le \infty}$
with $\mu (E^c \cap (\cup_{1 \le i < L} B^{cl}_i))$ as small as we wanted,
contradicting  $\inf_n \mu (E^c \cap O_n) > 0$.

\end{document}